\numberwithin{equation}{section}
\newtheorem{theorem}{Theorem}[section]
\newtheorem{lemma}[theorem]{Lemma}
\def\a{\alpha}
\def\d{\delta}
\def\G{\Gamma}
\def\l{\lambda}
\def\th{\theta}
\def\vp{\varphi}
\def\cut{\setminus}
\DeclareMathOperator\re{{Re}} \DeclareMathOperator\im{{Im}}
\def\sinh{\mbox{sinh}}
\def\cosh{\mbox{cosh}}
\def\arctanh{\mbox{arctanh}}
\def\AND{\qquad\mbox{and}\qquad}
\def\C{\mathbb C}
\def\N{\mathbb N}
\begin{document}
\title{Asymptotics of Orthogonal Polynomials via Recurrence Relations}

\author{X.-S. Wang\thanks{Corresponding author.
Department of Mathematics and Statistics, York University, Toronto, Ontario, Canada}~ and
R. Wong\thanks{Department of Mathematics, City University of Hong Kong, Tat Chee Avenue, Kowloon, Hong Kong}}

\date{}

\maketitle

\begin{abstract}
We use the Legendre polynomials and the Hermite polynomials as two examples to illustrate a simple and systematic technique on deriving asymptotic formulas for orthogonal polynomials via recurrence relations. Another application of this technique is to provide a solution to a problem recently raised by M. E. H. Ismail.
\end{abstract}

\noindent {\bf Keywords} Asymptotics; orthogonal polynomials;
recurrence relation; Legendre Polynomials; Hermite Polynomials.

\bigskip

\noindent {\bf AMS Subject Classification} Primary
41A60 $\cdot$ Secondary  33C45

\newpage

\section{Introduction}
There are many powerful and systematically developed techniques in asymptotic theory for orthogonal polynomials. For instance, the steepest-descent method for integrals \cite{Wo89},
the WKB (Liouville-Green) approximation for differential equations \cite{Ol97},
the Deift-Zhou's nonlinear steepest-descent method for Riemann-Hilbert problems \cite{DKMVZ99-0,DKMVZ99},
and etc. Here, we intend to develop a simple, and yet systematic approach to derive asymptotic formulas for orthogonal polynomials by using their recurrence relations.
Let $\{\pi_n(x)\}_{n=0}^\infty$ be a system of monic polynomials satisfying the recurrence relation
\begin{eqnarray}\label{rr}
\pi_{n+1}(x)=(x-a_n)\pi_n(n)-b_n\pi_{n-1}(x),\qquad n\ge1,
\end{eqnarray}
and the initial conditions $\pi_0(x)=1$ and $\pi_1(x)=x-a_0$.
Note that for the sake of convenience, we have normalized the polynomials to be monic.
To construct the asymptotic formulas of $\pi_n(x)$, we first set
\begin{eqnarray}\label{wk}
\pi_n(x)=\prod_{k=1}^nw_k(x).
\end{eqnarray}
It is readily seen from (\ref{rr}) that $w_1(x)=x-a_0$ and
\begin{eqnarray}\label{wk-rr}
w_{k+1}(x)=x-a_k-{b_k\over w_k},\qquad k\ge1.
\end{eqnarray}
When $x$ is away from the oscillatory region of the orthogonal polynomials, it is easy to find an asymptotic formula for $w_k(x)$ from (\ref{wk-rr}). Then, as we shall see, the asymptotic behavior of $\pi_n(x)$ for $x$ away from the oscillatory region can be obtained readily.
When $x$ is near the oscillatory region, we use a method similar to that given in \cite{WW02,WL92} to derive asymptotic formulas for general solutions of (\ref{rr}).
The asymptotic formula of $\pi_n(x)$ for $x$ near the oscillatory region is then obtained by doing a matching.
In the subsequent three sections, we will consider the following three cases:
\begin{enumerate}[\mbox{Case} 1:]
  \item $a_n=0$ and $b_n=n^2/(4n^2-1)$. This case is related to the Legendre polynomials.
  \item $a_n=0$ and $b_n=n/2$. This case is related to the Hermite polynomials.
  \item $a_n=n^2$ and $b_n=1/4$. This case was recently brought to our attention by M. E. H. Ismail.
\end{enumerate}
For simplicity, we use the same notations in the following three sections.
Because each section is independent and self-contained, this will not lead to any confusion.

\section{Case 1: the Legendre polynomials}
The Legendre polynomials can be defined as \cite[(1.8.57)]{KS98}
$$P_n(x)={}_2F_1\left(\begin{array}{c}
    -n,n+1\\1
  \end{array}
  \bigg|{1-x\over2}\right).$$
They satisfy the recurrence relation \cite[(1.8.59)]{KS98}
$$(2n+1)xP_n(x)=(n+1)P_{n+1}(x)+nP_{n-1}(x).$$
For convenience, we normalize the Legendre polynomials to be monic. Put
$$\pi_n(x):={2^nn!\over(n+1)_n}P_n(x).$$
The monic Legendre polynomials $\{\pi_n(x)\}_{n=0}^\infty$ satisfy \cite[(1.8.60)]{KS98}
\begin{eqnarray}
&&\pi_{n+1}(x)=x\pi_n(x)-\frac{n^2}{4n^2-1}\pi_{n-1}(x),\qquad n\ge1,\label{L-rr}\\
&&\pi_0(x)=1,\qquad \pi_1(x)=x.\label{L-ic}
\end{eqnarray}
\begin{theorem}
As $n\to\infty$, we have
\begin{eqnarray}\label{L-out}
\pi_n(x)\sim\left(\frac{x+\sqrt{x^2-1}}2\right)^n
\left({x+\sqrt{x^2-1}\over2\sqrt{x^2-1}}\right)^{1/2}
\end{eqnarray}
for $x$ in the complex plane bounded away from $[-1,1]$.
\end{theorem}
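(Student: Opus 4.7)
The plan is to analyze the Riccati-type recurrence \eqref{wk-rr}, which for the monic Legendre polynomials takes the form
\[
w_{k+1}(x) = x - \frac{k^2/(4k^2-1)}{w_k(x)}, \qquad w_1(x) = x.
\]
Since $b_k = k^2/(4k^2-1) \to 1/4$, the natural candidate for the limit of $w_k(x)$ is a fixed point of the map $w \mapsto x - 1/(4w)$, i.e., a solution of $\omega^2 - x\omega + 1/4 = 0$. The two roots are $(x \pm \sqrt{x^2-1})/2$; the linearisation at a fixed point has multiplier $1/(4\omega^2)$, so the attracting root for $x \notin [-1,1]$ is
\[
\omega(x) := \frac{x + \sqrt{x^2-1}}{2},
\]
which satisfies $|\omega(x)| > 1/2$.

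First I would verify that $w_k(x) \to \omega(x)$ uniformly on compact subsets of $\mathbb{C}\setminus[-1,1]$, by an eventual-contraction argument (the linearised map is a strict contraction and the perturbation $b_k - 1/4 = 1/(4(4k^2-1))$ is $O(1/k^2)$). I would then refine this into a full asymptotic expansion by inserting the ansatz
\[
w_k(x) = \omega(x) + \frac{c_2(x)}{k^2} + \frac{c_4(x)}{k^4} + \cdots
\]
into \eqref{wk-rr} and matching coefficients. Only even inverse powers of $k$ enter because $b_k - 1/4$ is an even function of $k$. Using the identities $x\omega = \omega^2 + 1/4$ and $4\omega^2 - 1 = 4\omega\sqrt{x^2-1}$, the leading correction works out to $c_2(x) = -1/(16\sqrt{x^2-1})$, with subsequent coefficients determined recursively.

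Taking logarithms of $\pi_n = \prod_{k=1}^n w_k$ then gives
\[
\log\pi_n(x) = n\log\omega(x) + \sum_{k=1}^n \log\bigl(w_k(x)/\omega(x)\bigr),
\]
and the uniform $O(1/k^2)$ bound on $w_k/\omega - 1$ shows that the series converges on compact subsets of $\mathbb{C}\setminus[-1,1]$ to an analytic function $\log C(x)$. Hence $\pi_n(x)/\omega(x)^n$ tends to an analytic limit $C(x)$, which already reproduces the $\omega^n$ factor in \eqref{L-out}.

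I expect the main obstacle to be the identification of the prefactor $C(x)$ as $((x+\sqrt{x^2-1})/(2\sqrt{x^2-1}))^{1/2}$: the value of the infinite product $\prod(w_k/\omega)$ depends on the behaviour of $w_k(x)$ for all $k$, not only on its large-$k$ expansion. I would pin this constant down by matching the exterior formula with the oscillatory-region asymptotic developed in the subsequent parts of Section~2; on overlapping neighbourhoods of the turning points $x = \pm 1$, the two expansions must agree via the standard WKB-type connection formulas across a simple turning point. As an independent check, $C(x)$ may also be fixed by evaluating the formula at a single convenient value of $x$, using Stirling's approximation in the classical identity $\pi_n(x) = (2^n(n!)^2/(2n)!)\,P_n(x)$ together with the Laplace asymptotic for $P_n(x)$.
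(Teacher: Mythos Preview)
Your contraction argument correctly shows that $\pi_n(x)/\omega(x)^n$ converges, uniformly on compacta of $\C\cut[-1,1]$, to an analytic limit $C(x)$; the $O(1/k^2)$ bound on $w_k-\omega$ is right and makes the infinite product absolutely convergent. Where your proposal diverges from the paper is in the identification of $C(x)$. The paper does \emph{not} leave this for a later matching step. Instead, after forming $u_k:=w_k/\omega$ and substituting $t:=(x-\sqrt{x^2-1})^2$, it observes that the partial products $Q_n(t):=\prod_{k=1}^n u_k$ themselves satisfy a clean three-term recurrence
\[
Q_{n+1}(t)=(1+t)Q_n(t)-\frac{4n^2t}{4n^2-1}\,Q_{n-1}(t),
\]
from which an explicit closed form
\[
Q_n(t)=\sum_{j=0}^n\frac{(1/2)_j(n-j+1)_j}{j!\,(n-j+1/2)_j}\,t^j
\]
is read off (and verified by induction). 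Dominated convergence then gives $Q_n(t)\to(1-t)^{-1/2}$, which is exactly the prefactor in \eqref{L-out}. So the paper obtains the constant entirely from the recurrence, with no external input.

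Your first proposed route for pinning down $C(x)$---matching with the oscillatory-region asymptotic of Section~2---is circular in this paper: the proof of that oscillatory formula \emph{uses} \eqref{L-out}, prefactor and all, as the input that fixes the undetermined functions $f$ and $g$ (see the step where \eqref{L-out} is converted into the analogue of the outer formula for $p_n$ and matched against the inner ansatz). Your fallback of invoking the classical Laplace asymptotic for $P_n(x)$ would work, but it imports precisely the result the paper is trying to derive from the recurrence alone, so it undercuts the point of the exercise. The missing idea in your plan is the exact evaluation of $\prod_k u_k$ via the $Q_n$ recurrence.
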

\begin{proof}
Set
\begin{eqnarray}\label{L-wk}
\pi_n(x)=\prod_{k=1}^nw_k(x).
\end{eqnarray}
From (\ref{L-rr}), (\ref{L-ic}) and (\ref{L-wk}), it follows that
\begin{eqnarray}
&&w_{k+1}(x)=x-\frac{k^2}{4k^2-1}\frac1{w_k(x)},\qquad k\ge1,\label{L-wk-rr}\\
&&w_1(x)=x.\label{L-wk-ic}
\end{eqnarray}
As $k\to\infty$, we have
$$w_k(x)\sim\frac{x+\sqrt{x^2-1}}2$$
for $x\in\C\cut[-1,1]$.
Here the square root takes its principle value so that $\sqrt{x^2-1}\sim x$ as $x\to\infty$.
Define
\begin{eqnarray}\label{L-w}
w(x):={x+\sqrt{x^2-1}\over2}
\end{eqnarray}
and
\begin{eqnarray}\label{L-uk}
u_k(x):={w_k(x)\over w(x)}.
\end{eqnarray}
It is easily seen from (\ref{L-wk-rr}), (\ref{L-wk-ic}) and (\ref{L-uk}) that
\begin{eqnarray}
&&u_{k+1}(x)=\frac{x}{w(x)}-\frac{k^2}{4k^2-1}\frac1{w(x)^2u_k(x)},\qquad k\ge1,
\label{L-ukx}\\
&&u_1(x)={x\over w(x)}.\label{L-u1x}
\end{eqnarray}
We make a change of variable
\begin{eqnarray}\label{L-t}
t=t(x):=(x-\sqrt{x^2-1})^2.
\end{eqnarray}
It follows from (\ref{L-w}) and (\ref{L-t}) that
\begin{eqnarray}
w(x)^2={1\over4t},\qquad \frac x{w(x)}=1+t.
\end{eqnarray}
Hence, the equations (\ref{L-ukx}) and (\ref{L-u1x}) can be written as
\begin{eqnarray}
&&u_{k+1}(x)=1+t-\frac{4k^2t}{4k^2-1}\frac1{u_k(x)},\qquad k\ge1,\label{L-uk-rr}\\
&&u_1(x)=1+t.\label{L-uk-ic}
\end{eqnarray}
Define $Q_0(t):=1$ and
\begin{eqnarray}\label{L-Q}
Q_n(t):=\prod_{k=1}^nu_k(x),\qquad n\ge1.
\end{eqnarray}
From (\ref{L-uk-rr}), (\ref{L-uk-ic}) and (\ref{L-Q}), we obtain $Q_1(t)=1+t$ and
$$Q_{n+1}(t)=(1+t)Q_n(t)-\frac{4n^2t}{4n^2-1}Q_{n-1}(t).$$
From this recurrence relation, one can construct a generating function from which it is easily deducible that $Q_n(t)$ has the explicit expression
$$Q_n(t)=\sum_{j=0}^n\frac{(1/2)_j(n-j+1)_j}{j!(n-j+1/2)_j}t^j.$$
A simpler verification of this identity is by induction.
Using the Lebesgue dominated convergence theorem,
it can be readily shown that
$$Q_n(t)\to(1-t)^{-1/2}$$
as $n\to\infty$.
Note that by (\ref{L-wk}), (\ref{L-uk}) and (\ref{L-Q}), $\pi_n(x)=w(x)^nQ_n(t)$.
Thus, it follows that
$$\pi_n(x)\sim w(x)^n(1-t)^{-1/2}$$
as $n\to\infty$. This together with (\ref{L-w}) and (\ref{L-t}) yields (\ref{L-out}).
\end{proof}

\begin{theorem}
Let $\d>0$ be any fixed small number.
For $x$ in a small complex neighborhood of the interval $[-1+\d,1-\d]$, we have
\begin{eqnarray}\label{L-os}
\pi_n(x)\sim{1\over2^n}\left[\cos n\th\left({1+\sin\th\over\sin\th}\right)^{1/2}
+\sin n\th \left({1-\sin\th\over\sin\th}\right)^{1/2}\right]
\end{eqnarray}
as $n\to\infty$, where $\th=\th(x):=\arccos x$ with $0<\re\th<\pi$.
\end{theorem}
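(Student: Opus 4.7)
The plan is to adapt the strategy of Theorem 2.1 to the oscillatory region, where the two ``characteristic roots'' $\rho_{\pm}(x) := (x \pm \sqrt{x^2-1})/2$ of the recurrence (\ref{L-rr}) have equal modulus $1/2$ and neither dominates the other. Writing $x = \cos\th$, these become $e^{\pm i\th}/2$, so one expects two linearly independent asymptotic solutions of (\ref{L-rr}) of the form $(e^{\pm in\th}/2^n)\,g^{\pm}(\th)$. Since $\pi_n(x)$ is entire, its asymptotic behaviour on a small complex neighbourhood of $[-1+\d, 1-\d]$ must be a fixed linear combination
$$\pi_n(x) \sim \frac{1}{2^n}\bigl[A(\th)\cos n\th + B(\th)\sin n\th\bigr],$$
with unknown prefactors $A(\th), B(\th)$ to be determined.

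The first step is to construct the two independent asymptotic solutions by mimicking the proof of Theorem 2.1 for each branch of $\sqrt{x^2-1}$ separately. Taking $\sqrt{x^2-1} = i\sin\th$, I would set $w_{+}(x) := e^{i\th}/2$ and $t_{+} := e^{-2i\th}$ in the analogues of (\ref{L-w}) and (\ref{L-t}), and repeat the argument through (\ref{L-Q}) to obtain the solution
$$\pi_n^{+}(x) := w_{+}(x)^n Q_n(t_{+}) \sim \frac{e^{in\th}}{2^n}\,(1 - e^{-2i\th})^{-1/2};$$
the opposite choice $\sqrt{x^2-1} = -i\sin\th$ produces the second independent solution $\pi_n^{-}$ with $e^{-in\th}$ and $t_{-} = e^{2i\th}$ in the corresponding roles. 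Because $|t_{\pm}| = 1$ exactly on the real interval, the convergence $Q_n(t_{\pm}) \to (1-t_{\pm})^{-1/2}$ must first be established on a complex neighbourhood where $|t_{\pm}| < 1$ and then extended to the boundary by a uniformity argument.

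The second step is to determine $A(\th)$ and $B(\th)$ by matching with the outer asymptotic (\ref{L-out}). Expanding $\cos n\th$ and $\sin n\th$ in exponentials gives
$$\pi_n(x) \sim \frac{1}{2^{n+1}}\bigl[(A - iB)\,e^{in\th} + (A + iB)\,e^{-in\th}\bigr],$$
while evaluating (\ref{L-out}) with $\sqrt{x^2-1} = i\sin\th$ (upper half-plane) yields
$$\pi_n(x) \sim \frac{e^{in\th}}{2^n}\left(\frac{e^{i\th}}{2i\sin\th}\right)^{1/2},$$
and the opposite branch yields the analogous expression with $e^{-in\th}$ and $-i$ replacing $i$. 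Equating the coefficients of $e^{\pm in\th}/2^n$ gives a $2\times 2$ linear system for $A, B$; a direct computation shows its solution is
$$A(\th) = \left(\frac{1+\sin\th}{\sin\th}\right)^{1/2}, \qquad B(\th) = \left(\frac{1-\sin\th}{\sin\th}\right)^{1/2},$$
and substituting back into the ansatz yields (\ref{L-os}).

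The main obstacle will be the rigorous construction of the two asymptotic solutions $\pi_n^{\pm}$ on a uniform complex neighbourhood of $[-1+\d, 1-\d]$, together with the verification that they indeed span the asymptotic solution space of (\ref{L-rr}) there; this is where the passage from $|t_\pm| < 1$ to $|t_\pm| = 1$ and the control of the two branches of $\sqrt{x^2-1}$ become delicate. Once those analytic issues are handled, the matching with (\ref{L-out}) across the cut $[-1,1]$ fixes the linear combination uniquely, and the oscillatory formula (\ref{L-os}) follows from a routine algebraic computation.
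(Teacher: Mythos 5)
Your final matching computation (splitting into $e^{\pm in\theta}$, comparing with (\ref{L-out}) on the two sides of the cut, and solving the $2\times2$ system for $A,B$) is exactly the paper's last step and is correct. The gap is in the construction of the interior expansion. First, $\pi_n^{+}$ and $\pi_n^{-}$ are not two independent solutions: the relation $\pi_n(x)=w(x)^nQ_n(t)$ with $t=1/(4w(x)^2)$ is an exact algebraic identity that holds for \emph{either} branch of $\sqrt{x^2-1}$, so $w_+^nQ_n(t_+)$ and $w_-^nQ_n(t_-)$ are both identically equal to $\pi_n(x)$; your construction produces the same function twice and cannot span a two-dimensional asymptotic solution space. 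Second, and fatally, the limit $Q_n(t)\to(1-t)^{-1/2}$ \emph{fails} on $|t|=1$ and cannot be rescued by a uniformity argument: the recurrence $Q_{n+1}=(1+t)Q_n-\frac{4n^2t}{4n^2-1}Q_{n-1}$ has characteristic roots $1$ and $t$, and on the unit circle the second root ceases to be recessive, so $Q_n(e^{\mp 2i\theta})$ acquires an oscillatory term $\asymp e^{\mp 2in\theta}$ of the \emph{same} order as the limit you want. You can see this directly from the target (\ref{L-os}): it contains $e^{in\theta}$ and $e^{-in\theta}$ at equal order, whereas $w_\pm^n(1-t_\pm)^{-1/2}$ is a single exponential — in fact it equals exactly one of the two terms of the true answer. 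So asserting the boundary limit of $Q_n$ is equivalent to asserting half of the theorem, and the argument is circular at its core.

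What is needed (and what the paper does) is an independent derivation of the two-term interior form. The paper renormalizes $\pi_n$ to $p_n$ so that the recurrence becomes $\bigl(1+O(n^{-2})\bigr)2xp_n=p_{n+1}+p_{n-1}$, posits the Birkhoff-type ansatz $p_n\sim n^{\alpha}r^n[f\cos(n\varphi)+g\sin(n\varphi)]$ in the spirit of Wang--Wong and Wong--Li, and determines $r=1$, $\varphi=\arccos x$, $\alpha=0$ by matching powers of $1/n$ in the recurrence itself; only then are $f,g$ fixed by matching with (\ref{L-out}) exactly as you propose. Your skeleton (two oscillatory solutions plus matching across the cut) is the right idea, but you must build those solutions from the recurrence directly rather than by analytic continuation of the outer product formula to the boundary of its region of validity.
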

\begin{proof}
To put the difference equation (\ref{L-rr}) in the form suggested by Wang and Wong \cite[(2.1)]{WW02}, we let
\begin{eqnarray}\label{L-p}
p_n(x):={2^n\G(n/2+1/4)\G(n/2+3/4)\over[\G(n/2+1/2)]^2}\pi_n(x).
\end{eqnarray}
From (\ref{L-p}), it is easily seen that
\begin{eqnarray}\label{L-p-rr}
{[\G(n/2+1/2)]^2(n/2+1/4)\over[\G(n/2+1)]^2}\cdot2xp_n(x)=p_{n+1}(x)+p_{n-1}(x).
\end{eqnarray}
Motivated by the form of the normal (series) solutions to second-order difference equations (see \cite[(1.5)]{WL92}), we assume
\begin{eqnarray}\label{L-p-asymp1}
p_n(x)\sim n^\a[r(x)]^n\{f(x)\cos[n\vp(x)]+g(x)\sin[n\vp(x)]
\end{eqnarray}
as $n\to\infty$, where $r(x)$ and $\vp(x)$ are real-valued functions,
whereas $f(x)$ and $g(x)$ can be complex-valued.
We now proceed to determine the constant $\a$ and the functions $r(x)$, $\vp(x)$, $f(x)$ and $g(x)$ in (\ref{L-p-asymp1}).
It can be easily shown from (\ref{L-p-asymp1}) that
\begin{eqnarray}\label{L-p-pm-asymp1}
p_{n\pm1}(x)\sim n^\a r^{n\pm1}[(f\cos\vp\pm g\sin\vp)\cos(n\vp)+(g\cos\vp\mp f\sin\vp)\sin(n\vp)].
\end{eqnarray}
Furthermore, by the asymptotic formula for the ratio of Gamma functions \cite[(6.1.47)]{AS70} we have
\begin{eqnarray}\label{L-n}
{[\G(n/2+1/2)]^2(n/2+1/4)\over[\G(n/2+1)]^2}=1+O(n^{-2})
\end{eqnarray}
as $n\to\infty$.
Applying (\ref{L-p-asymp1}), (\ref{L-p-pm-asymp1}) and (\ref{L-n}) to (\ref{L-p-rr}) gives
\begin{eqnarray*}
2x[f\cos(n\vp)+g\sin(n\vp)]&\sim& r[(f\cos\vp+g\sin\vp)\cos(n\vp)+(g\cos\vp-f\sin\vp)\sin(n\vp)]
\\&&+r^{-1}[(f\cos\vp-g\sin\vp)\cos(n\vp)+(g\cos\vp+f\sin\vp)\sin(n\vp)].
\end{eqnarray*}
Comparing the coefficients of $\cos(n\vp)$ and $\sin(n\vp)$ on both sides of the last formula yields
\begin{eqnarray*}
2xf=(r+r^{-1})f\cos\vp+(r-r^{-1})g\sin\vp;\\
2xg=-(r-r^{-1})f\sin\vp+(r+r^{-1})g\cos\vp.
\end{eqnarray*}
Thus, we obtain from the above two equations
$$x=\cosh(\log r)\cos\vp,\qquad\qquad 0=\sinh(\log r)\sin\vp.$$
It can be easily seen that the only solution to these equations is
$\log r=0$ and $\vp=\arccos x$.
Recall that
\begin{eqnarray}\label{L-th}
\th=\th(x):=\arccos x,\qquad\qquad 0<\re\th<\pi.
\end{eqnarray}
Hence, we conclude that
\begin{eqnarray}\label{L-r}
r=1\AND\vp=\th.
\end{eqnarray}
Next we are going to determine the constant $\a$ in (\ref{L-p-asymp1}).
Applying (\ref{L-r}) to (\ref{L-p-asymp1}) gives
\begin{eqnarray}\label{L-p-asymp2}
p_n(x)\sim n^\a[f\cos(n\th)+g\sin(n\th)],
\end{eqnarray}
and
\begin{eqnarray}\label{L-p-pm-asymp2}
p_{n\pm1}(x)\sim n^\a \left(1\pm{\a\over n}\right)[(f\cos\th\pm g\sin\th)\cos(n\th)+(g\cos\th\mp f\sin\th)\sin(n\th)].
\end{eqnarray}
A combination of (\ref{L-p-rr}), (\ref{L-n}), (\ref{L-p-asymp2}) and (\ref{L-p-pm-asymp2}) yields
\begin{eqnarray*}
2x[f\cos(n\th)+g\sin(n\th)]\sim2f\cos\th\cos(n\th)+2g\cos\th\sin(n\th)
+{\a\over n}[2g\sin\th\cos(n\th)-2f\sin\th\sin(n\th)].
\end{eqnarray*}
In view of (\ref{L-th}), we obtain by matching the coefficients in the last formula
$$\a g\sin\th=0,\qquad \a f\sin\th=0.$$
These equations hold for all $x$ in a small complex neighborhood of $[-1+\d,1-\d]$.
Since $f$ and $g$ can not be identically zero, it follows that
\begin{eqnarray}\label{L-a}
\a=0.
\end{eqnarray}
Thus, we have from (\ref{L-p-asymp1}), (\ref{L-r}) and (\ref{L-a})
\begin{eqnarray}\label{L-p-os}
p_n(x)\sim f\cos n\th+g\sin n\th
\end{eqnarray}
as $n\to\infty$.
This formula holds uniformly for $x$ in a small complex neighborhood of $[-1+\d,1-\d]$.
Moreover, it follows from (\ref{L-out}) and (\ref{L-p}) that
\begin{eqnarray}\label{L-p-out}
p_n(x)\sim(x+\sqrt{x^2-1})^n
\left({x+\sqrt{x^2-1}\over2\sqrt{x^2-1}}\right)^{1/2}
\end{eqnarray}
for complex $x$ bounded away from $[-1,1]$.
Our last step is to determine the coefficients $f$ and $g$ in (\ref{L-p-os}) by matching the above two formulas in an overlapping region.
With $\th$ and $x$ given by (\ref{L-th}), it can be shown that
for $\im x>0$, we have $\im\th<0$. Thus, (\ref{L-p-os}) implies
$$p_n(x)\sim \left({f\over2}+{g\over2i}\right)e^{in\th}.$$
Meanwhile, in view of $x=\cos\th$ and $\sqrt{x^2-1}=i\sin\th$ by (\ref{L-th}), we obtain from (\ref{L-p-out}) that
\begin{eqnarray*}
p_n(x)\sim e^{in\th}\left[{e^{i(\th-\pi/2)}\over 2\sin\th}\right]^{1/2}.
\end{eqnarray*}
Coupling the last two formulas gives
$${f\over2}+{g\over2i}={e^{i(\th/2-\pi/4)}\over (2\sin\th)^{1/2}}.$$
Similarly, matching (\ref{L-p-os}) with (\ref{L-p-out}) in the region $\im x<0$ yields
$${f\over2}-{g\over2i}={e^{-i(\th/2-\pi/4)}\over (2\sin\th)^{1/2}}.$$
From the last two equations of $f$ and $g$ we have
$$f=\left({1+\sin\th\over\sin\th}\right)^{1/2},\qquad
g=\left({1-\sin\th\over\sin\th}\right)^{1/2}.$$
This together with (\ref{L-p}) and (\ref{L-p-os}) implies (\ref{L-os}).
\end{proof}

\section{Case 2: the Hermite polynomials}
The Hermite polynomials can be defined as \cite[(1.13.1)]{KS98}
$$H_n(x)=(2x)^n{}_2F_0\left(\begin{array}{c}
    -n/2,-(n-1)/2\\-
  \end{array}
  \bigg|-{1\over x^2}\right).$$
They satisfy the recurrence relation \cite[(1.13.3)]{KS98}
$$2xH_n(x)=H_{n+1}(x)+2nH_{n-1}(x).$$
For convenience, we normalize the Hermite polynomials to be monic, and put
$$\pi_n(x):=2^{-n}H_n(x).$$
The monic Hermite polynomials $\{\pi_n(x)\}_{n=0}^\infty$ satisfy \cite[(1.13.4)]{KS98}
\begin{eqnarray}
&&\pi_{n+1}(x)=x\pi_n(x)-\frac n2\pi_{n-1}(x),\qquad n\ge1,\label{H-rr}\\
&&\pi_0(x)=1,\qquad \pi_1(x)=x.\label{H-ic}
\end{eqnarray}
\begin{theorem}
As $n\to\infty$, we have
\begin{eqnarray}\label{H-out}
\pi_n(\sqrt{2n}y)\sim\left({n\over2e}\right)^{n/2}
\exp\left\{n[y^2-y\sqrt{y^2-1}+\log(y+\sqrt{y^2-1})]\right\}
\left({y+\sqrt{y^2-1}\over2\sqrt{y^2-1}}\right)^{1/2}
\end{eqnarray}
for complex $y$ bounded away from the interval $[-1,1]$.
\end{theorem}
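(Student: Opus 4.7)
The plan is to adapt the product-decomposition approach of Theorem~2.1, the key new complication being that $b_n = n/2$ grows with $n$, so the auxiliary quantities must be rescaled in an $n$-dependent way. First I would set $\pi_n(x) = \prod_{k=1}^n w_k(x)$ with $w_1(x) = x$ and $w_{k+1}(x) = x - k/(2w_k(x))$, substitute $x = \sqrt{2n}\,y$, and put $v_k := w_k(\sqrt{2n}\,y)/\sqrt{n/2}$. The recurrence becomes $v_{k+1} = 2y - (k/n)/v_k$ with $v_1 = 2y$, whose fixed-point equation $v = 2y - s/v$ at $s = k/n$ is $n$-independent; the branch matching $v_1 = 2y$ at $s = 0$ gives $v_k \sim A(k/n)$ with $A(s) := y + \sqrt{y^2 - s}$. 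Writing $v_k = A(k/n)(1 + \varepsilon_k)$ and using the identity $A(s)(2y - A(s)) = s$, the recurrence linearizes to
$$\varepsilon_{k+1} = \sigma_k + \rho_k\varepsilon_k + O(\varepsilon_k^2),\qquad \rho_k = \frac{2y - A(k/n)}{A((k+1)/n)},\qquad \sigma_k = \frac{A(k/n) - A((k+1)/n)}{A((k+1)/n)} = O(1/n).$$
For $y$ bounded away from $[-1,1]$, $|\rho_k|$ stays below some constant $c < 1$, so induction gives $\varepsilon_k = O(1/n)$ uniformly in $k$; a direct calculation using $1 - \rho(s) = 2\sqrt{y^2-s}/A(s)$ yields the sharper asymptotic $\varepsilon_k \sim \xi(k/n)/n$ with $\xi(s) = 1/[4(y^2-s)]$.

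I next take logarithms,
$$\log\pi_n(\sqrt{2n}\,y) = \tfrac{n}{2}\log(n/2) + \sum_{k=1}^n \log A(k/n) + \sum_{k=1}^n\log(1+\varepsilon_k).$$
For the first sum, the Euler--Maclaurin formula gives
$$\sum_{k=1}^n \log A(k/n) = n\int_0^1 \log(y+\sqrt{y^2-s})\,ds + \tfrac{1}{2}\log\frac{y+\sqrt{y^2-1}}{2y} + O(1/n),$$
and integration by parts evaluates the integral to $y^2 - 1/2 + \log(y+\sqrt{y^2-1}) - y\sqrt{y^2-1}$; combining the $-n/2$ with $(n/2)\log(n/2)$ produces $(n/2)\log(n/(2e))$, which is the leading exponential factor of (\ref{H-out}). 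The remaining prefactor comes from the second sum: a Riemann-sum estimate gives $\sum_{k=1}^n \varepsilon_k \to \tfrac{1}{4}\int_0^1 ds/(y^2-s) = \tfrac{1}{4}\log[y^2/(y^2-1)]$, so $\prod(1+\varepsilon_k)\to\sqrt{y/\sqrt{y^2-1}}$, and multiplying this by the boundary correction $\sqrt{(y+\sqrt{y^2-1})/(2y)}$ collapses algebraically into $\sqrt{(y+\sqrt{y^2-1})/(2\sqrt{y^2-1})}$, which is exactly the prefactor in (\ref{H-out}).

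The main obstacle is the uniform control needed in the subleading analysis: I must verify that $\varepsilon_k = \xi(k/n)/n + o(1/n)$ holds uniformly in $k = 1,\dots,n$ so that $\sum\varepsilon_k$ converges to the stated limit, and that the Euler--Maclaurin remainder for $\sum\log A(k/n)$ is genuinely $O(1/n)$ rather than $O(1)$. Both of these rely on the uniform contraction bound $|\rho_k|\le c < 1$, which is exactly why the hypothesis requires $y$ to stay bounded away from $[-1,1]$ (so that $\sqrt{y^2-s}$ does not vanish for $s\in[0,1]$ and the linearized recurrence is contractive over the entire range of $k$).
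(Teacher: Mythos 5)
Your proposal is correct and follows the same overall strategy as the paper: the product decomposition $\pi_n=\prod w_k$, the identification of the leading behavior $w_k(\sqrt{2n}\,y)\sim\sqrt{n/2}\,(y+\sqrt{y^2-k/n}\,)$ together with the relative correction $1/[4n(y^2-k/n)]$ (which is exactly the paper's factor $1+\tfrac{1}{2(x_n^2-2k)}$), and the trapezoidal/Riemann-sum evaluation of $\sum\log w_k$, including the boundary term $\tfrac12\log\frac{y+\sqrt{y^2-1}}{2y}$ that combines with $(y^2/(y^2-1))^{1/4}$ to give the stated prefactor. The one genuine difference lies in how the uniform asymptotics of $w_k$ is justified: the paper sandwiches $w_k(x_n)$ between two explicit bounds proved by induction for \emph{real} $y\notin[-1,1]$ and then invokes a continuity argument to extend to complex $y$, whereas you linearize the recurrence around the slowly varying fixed point $A(k/n)=y+\sqrt{y^2-k/n}$ and exploit the uniform contraction $|\rho_k|\le c<1$ (which holds since $|y-\sqrt{y^2-s}|<\sqrt{s}<|y+\sqrt{y^2-s}|$ for $y$ off $[-1,1]$). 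Your route has the advantage of working directly for complex $y$ and of \emph{deriving} the form of the $O(1/n)$ correction as the quasi-fixed point $\sigma/(1-\rho)$ rather than guessing it and verifying by induction; the paper's route gives explicit, checkable inequalities but only on the real line. The remaining care you flag --- showing $\varepsilon_k=\xi(k/n)/n+O(n^{-2})$ by iterating $\delta_{k+1}=\rho_k\delta_k+O(n^{-2})$ for $\delta_k=\varepsilon_k-\xi(k/n)/n$, so that $\sum\varepsilon_k$ has the claimed limit --- is exactly the right point to nail down, and the contraction bound does the job.
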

\begin{proof}
Set
\begin{eqnarray}\label{H-wk}
\pi_n(x)=\prod_{k=1}^nw_k(x).
\end{eqnarray}
It follows from (\ref{H-rr}) and (\ref{H-ic}) that $w_1(x)=x$ and
$$w_{k+1}(x)=x-\frac{k}{2w_k(x)}.$$
Let $x=x_n:=\sqrt{2n}y$ with $y\in\C\cut[-1,1]$.
It can be proved by induction that for real $y$ and $y\notin[-1,1]$, we have
\begin{eqnarray*}
{x_n+\sqrt{x_n^2-2k}\over2}\left[1+{1\over2(x_n^2-2k)}
-{5x_n-\sqrt{x_n^2-2k}\over8(x_n^2-2k)^{5/2}}\right]
<w_k(x_n)<{x_n+\sqrt{x_n^2-2k}\over2}\left[1+{1\over2(x_n^2-2k)}\right]
\end{eqnarray*}
for all $k=1,\cdots,n$. From these inequalities, it follows that
\begin{eqnarray}\label{H-wk-asymp}
w_k(x_n)=\frac{x_n+\sqrt{x_n^2-2k}}2\left[1+\frac1{2(x_n^2-2k)}+O(n^{-2})\right]
\end{eqnarray}
as $n\to\infty$, uniformly in $k=1,\cdots,n$.
By using a continuity argument, it can be shown that the validity of this asymptotic formula can be extended to complex $y\in\C\cut[-1,1]$.
Recall that $x_n=\sqrt{2n}y$. By the trapezoidal rule
$$\frac 1n\sum_{k=1}^n f(k/n)=\int_0^1f(t)dt+\frac{f(1)-f(0)}{2n}+O(n^{-2}),$$
we have
\begin{eqnarray}
  \frac 1n\sum_{k=1}^n\log\frac{x_n+\sqrt{x_n^2-2k}}{x_n+\sqrt{x_n^2-2n}}
  &=&
  \frac 1n\sum_{k=1}^n\log(y+\sqrt{y^2-k/n})-\log(y+\sqrt{y^2-1})
  \nonumber\\
  &\sim&\int_0^1\log(y+\sqrt{y^2-t})dt+\frac1{2n}\log\frac{y+\sqrt{y^2-1}}{2y}-\log(y+\sqrt{y^2-1})
  \nonumber\\
  &=&y^2-1/2-y\sqrt{y^2-1}+\frac1{2n}\log\frac{y+\sqrt{y^2-1}}{2y}
\end{eqnarray}
and
\begin{eqnarray}
\sum_{k=1}^n\log\left[1+\frac1{2(x_n^2-2k)}\right]
&\sim&\sum_{k=1}^n\frac1{2(x_n^2-2k)}
={1\over n}\sum_{k=1}^n\frac1{4(y^2-k/n)}\nonumber\\
&\sim&\int_0^1\frac{dt}{4(y^2-t)}
={1\over4}\log{y^2\over y^2-1}
\end{eqnarray}
as $n\to\infty$. Applying the last two formulas and (\ref{H-wk-asymp}) to (\ref{H-wk}) yields
\begin{eqnarray*}
\pi_n(x_n)&\sim&\prod_{k=1}^n\left[\frac{x_n+\sqrt{x_n^2-2n}}2\right]\cdot
\prod_{k=1}^n\left[\frac{x_n+\sqrt{x_n^2-2k}}{x_n+\sqrt{x_n^2-2n}}\right]\cdot
\prod_{k=1}^n\left[1+\frac1{2(x_n^2-2k)}\right]
\\&\sim&\left({n\over2}\right)^{n/2}\left(y+\sqrt{y^2-1}\right)^n
\exp\left[n(y^2-1/2-y\sqrt{y^2-1})\right]\left(\frac{y+\sqrt{y^2-1}}{2y}\right)^{1/2}
\left({y^2\over y^2-1}\right)^{1/4}
\\&\sim&\left({n\over2e}\right)^{n/2}
\exp\left\{n[y^2-y\sqrt{y^2-1}+\log(y+\sqrt{y^2-1})]\right\}
\left({y+\sqrt{y^2-1}\over2\sqrt{y^2-1}}\right)^{1/2},
\end{eqnarray*}
thus proving (\ref{H-out}).
\end{proof}

\begin{theorem}\label{thm-H-os}
Let $\d>0$ be any fixed small number.
For $y$ in a small complex neighborhood of $[-1+\d,1-\d]$, we have
\begin{eqnarray}\label{H-os}
\pi_n(\sqrt{2n}y)\sim\left({n\over2e}\right)^{n/2}{e^{ny^2}\over(1-y^2)^{1/4}}
\left\{\cos\left[n(\th-\sin\th\cos\th)+{\th\over2}\right]+
\sin\left[n(\th-\sin\th\cos\th)+{\th\over2}\right]\right\}
\end{eqnarray}
as $n\to\infty$, where $\th=\th(y):=\arccos y$.
\end{theorem}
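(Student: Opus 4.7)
The strategy parallels that of Theorem 2.2: we normalize the three-term recurrence to a symmetric form, make a trigonometric ansatz for the oscillatory asymptotic, determine the phase and amplitude by substituting into the recurrence, and fix the two remaining constants by matching with the outer asymptotic (\ref{H-out}) of Theorem 3.1. The essential new feature compared to Legendre is that the coefficient $k/2$ in (\ref{H-rr}) grows with $k$, so the effective frequency and amplitude depend on $k/n$ and one must track their accumulated contributions across a turning point.

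Concretely, I would first set $p_k(x):=\sqrt{2^k/k!}\,\pi_k(x)$ to convert (\ref{H-rr}) into the symmetric form
$$\sqrt{(k+1)/2}\,p_{k+1}(x)+\sqrt{k/2}\,p_{k-1}(x)=x\,p_k(x).$$
With $x=\sqrt{2n}\,y$ and $y=\cos\th$, the auxiliary quadratic $w^2-xw+k/2=0$ has real roots for $k<ny^2$ and complex conjugate roots for $k>ny^2$, revealing a turning index $k_0:=\lceil ny^2\rceil$ with exponential behavior below and oscillatory behavior above. Following the normal-solution framework of \cite{WW02}, I would posit a WKB-type ansatz
$$p_k(\sqrt{2n}y)\sim A_k(y)\bigl[f(\th)\cos\Phi_k+g(\th)\sin\Phi_k\bigr],\qquad k_0\le k\le n,$$
with local frequency $\vp_k:=\arccos(y\sqrt{n/k})$ and cumulative phase $\Phi_k:=\sum_{j=k_0}^k\vp_j$. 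Substituting this into the symmetric recurrence and matching the coefficients of $\cos\Phi_k$ and $\sin\Phi_k$ at leading and subleading orders should fix $A_k$ and extract the refined phase, yielding at $k=n$
$$\Phi_n\sim n\int_{y^2}^1\arccos(y/\sqrt{u})\,du+\th/2=n(\th-\sin\th\cos\th)+\th/2$$
via an Euler-Maclaurin estimate combined with the substitution $u=y^2\sec^2 w$ to evaluate the integral. Combined with Stirling's formula for the normalizing factor $\sqrt{n!/2^n}$, the WKB transport equation for $A_k$ at $k=n$ will produce the amplitude prefactor $(n/(2e))^{n/2}\,e^{ny^2}/(1-y^2)^{1/4}$.

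Finally, $f$ and $g$ are determined by matching with (\ref{H-out}) in an overlap region where $y$ approaches $(-1+\d,1-\d)$ from one half-plane or the other. As in the Legendre case, $\im y>0$ forces $\im\th<0$, so the oscillatory formula collapses onto a single exponential $e^{in\th}$, which must coincide with the analytic continuation of (\ref{H-out}) across the cut $[-1,1]$. Comparing the two matching conditions on either side yields two linear equations whose solution gives $f=g=1$, producing the $\{\cos+\sin\}$ combination in (\ref{H-os}).

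The main obstacle is the turning-point bookkeeping. Because the effective cosine $y\sqrt{n/k}$ passes through unity as $k$ increases through $k_0$, the constant-coefficient ansatz used in Theorem 2.2 does not apply directly, and an Airy-type connection formula must be invoked to transfer data between the exponential ($k<k_0$) and oscillatory ($k>k_0$) regimes and to recover the $\th/2$ shift. Carrying the WKB transport equation accurately to subleading order uniformly across this transition is the most delicate computational step.
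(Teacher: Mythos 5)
Your overall architecture (symmetrize the recurrence, make an oscillatory ansatz, fix the free constants by matching with (\ref{H-out}) in the upper and lower half $y$-planes) is the same as the paper's, and your phase integral is right: $\int_{y^2}^{1}\arccos(y/\sqrt{u})\,du=\th-\sin\th\cos\th$ does reproduce $\vp$. But the core technical device is genuinely different, and the difference matters. The paper never runs a WKB analysis in $k$ at fixed $x$ and never meets a turning point. Instead it exploits the identity $p_{n\pm1}(\sqrt{2n}\,y)=p_{n\pm1}(\sqrt{2(n\pm1)}\,y_\pm)$ with $y_\pm=(n/(n\pm1))^{1/2}y$, so that each of the three polynomials in the recurrence is evaluated at its \emph{own} scaled variable. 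The ansatz is then $p_n(\sqrt{2n}y)\sim n^\a r(y)^n\{f\cos(n\vp)+g\sin(n\vp)\}$ with a single global phase function $\vp(y)$; the shift $y\to y_\pm$ is absorbed by a short Taylor-expansion lemma producing $\l=\vp-y\vp'/2$ and $\mu$, the leading order gives $r=e^{cy^2}$ and $\cos\l=y$ (hence $\vp=\arccos y-y\sqrt{1-y^2}$ directly, with no phase sum), and the $\th/2$ shift and the $(1-y^2)^{-1/4}$ amplitude come from solving an explicit first-order ODE system for $f$ and $g$ at the next order. For $y$ in a fixed neighborhood of $[-1+\d,1-\d]$ this is entirely a non-turning-point computation.

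The gap in your route is precisely the step you flag as "most delicate": the Airy-type connection across $k_0=\lceil ny^2\rceil$ and the uniform control of the discrete transport equation through that transition. That is a substantial piece of machinery (comparable in difficulty to the whole theorem), and you do not execute it. There is also a structural tension: if you intend to determine $f$ and $g$ by matching against (\ref{H-out}) for complex $y$ off $[-1,1]$ --- which is how the paper closes the argument --- then your cumulative phase $\Phi_k=\sum_{j=k_0}^{k}\vp_j$ is awkward, since $k_0$ is defined by a real inequality and does not continue analytically in $y$; whereas if you instead determine the constants by propagating the true initial data $\pi_0=1$, $\pi_1=x$ through the exponential regime $k<k_0$, the turning-point connection becomes unavoidable and the matching with (\ref{H-out}) is redundant. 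You should either adopt the rescaled-variable ansatz (which removes the turning point altogether) or commit to, and actually carry out, the discrete Airy connection; as written the proof is not complete.
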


To prove the above theorem, we need a lemma analogous to Lemma 1 in \cite{WW02}.
For convenience, we use the notation
\begin{eqnarray}\label{H-y-pm}
y_\pm:=\left({n\over n\pm1}\right)^{1/2}y\sim y\mp{y\over 2n}+{3y\over 8n^2}.
\end{eqnarray}
\begin{lemma}
Let $\vp(y)$ be any analytic function in a small complex neighborhood of $[\d,1-\d]$,
we have
\begin{eqnarray}\label{H-cos-asymp}
\cos[(n\pm1)\vp(y_\pm)]&\sim&\cos(n\vp)\left(\cos\l\mp{\mu\over n}\sin\l\right)\mp\sin(n\vp)\left(\sin\l\pm{\mu\over n}\cos\l\right)
\end{eqnarray}
and
\begin{eqnarray}\label{H-sin-asymp}
\sin[(n\pm1)\vp(y_\pm)]&\sim&\sin(n\vp)\left(\cos\l\mp{\mu\over n}\sin\l\right)\pm\cos(n\vp)\left(\sin\l\pm{\mu\over n}\cos\l\right)
\end{eqnarray}
as $n\to\infty$, where
\begin{eqnarray}\label{H-l}
\l=\l(y):=\vp(y)-{y\vp'(y)\over2},
\end{eqnarray}
and
\begin{eqnarray}\label{H-mu}
\mu=\mu(y):=-{y\vp'(y)\over8}+{y^2\vp''(y)\over8}.
\end{eqnarray}
\end{lemma}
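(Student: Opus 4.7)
The plan is to reduce everything to Taylor expansions of $\vp$ about $y$, combine the expansion of $y_\pm$ given in (\ref{H-y-pm}) with that of $\vp$, and then apply the angle-addition formulas for $\cos$ and $\sin$.

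First I would expand $\vp(y_\pm)$ using (\ref{H-y-pm}) and analyticity of $\vp$ near $[\d,1-\d]$:
\begin{eqnarray*}
\vp(y_\pm)\sim\vp(y)+\vp'(y)(y_\pm-y)+{1\over2}\vp''(y)(y_\pm-y)^2
\sim\vp(y)\mp{y\vp'(y)\over2n}+{3y\vp'(y)+y^2\vp''(y)\over8n^2}.
\end{eqnarray*}
Multiplying by $n\pm1$ and collecting powers of $1/n$, the constant term becomes $\pm(\vp-y\vp'/2)=\pm\l$ by (\ref{H-l}), while the $1/n$ term combines the $n\cdot n^{-2}$ contribution from $\vp(y_\pm)$ with the $(\pm1)\cdot n^{-1}$ cross term, producing $[-y\vp'+y^2\vp'']/8 =\mu$ by (\ref{H-mu}). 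Thus the key identity to establish is
\begin{eqnarray*}
(n\pm1)\vp(y_\pm)\sim n\vp(y)\pm\l(y)+{\mu(y)\over n} \qquad\mbox{as}\ n\to\infty.
\end{eqnarray*}

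Second, writing $B:=\pm\l+\mu/n$ and using $\cos(\pm\l)=\cos\l$, $\sin(\pm\l)=\pm\sin\l$, together with $\cos(\mu/n)\sim1$ and $\sin(\mu/n)\sim\mu/n$, the standard angle-addition formulas give
\begin{eqnarray*}
\cos B\sim\cos\l\mp{\mu\over n}\sin\l,\qquad \sin B\sim\pm\sin\l+{\mu\over n}\cos\l.
\end{eqnarray*}
Then $\cos[(n\pm1)\vp(y_\pm)]\sim\cos(n\vp)\cos B-\sin(n\vp)\sin B$ produces (\ref{H-cos-asymp}), and $\sin[(n\pm1)\vp(y_\pm)]\sim\sin(n\vp)\cos B+\cos(n\vp)\sin B$ produces (\ref{H-sin-asymp}).

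The mathematics here is entirely routine; the only real obstacle is the sign bookkeeping in the second step of the first paragraph, where three distinct sources feed into the $O(1)$ and $O(1/n)$ terms of $(n\pm1)\vp(y_\pm)$ and must conspire to reproduce exactly $\pm\l$ and $\mu/n$. Once that identity is in hand, the rest is mechanical expansion of cosine and sine of a small perturbation.
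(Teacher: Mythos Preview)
Your proposal is correct and follows essentially the same route as the paper: Taylor expand $\vp(y_\pm)$ using the expansion of $y_\pm$ from (\ref{H-y-pm}), multiply by $n\pm1$ to obtain $(n\pm1)\vp(y_\pm)\sim n\vp\pm\l+\mu/n$, and then apply the angle-addition formulas together with $\cos(\mu/n)\sim1$, $\sin(\mu/n)\sim\mu/n$. The only cosmetic difference is that the paper groups the correction as $\pm(\l\pm\mu/n)$ before applying addition formulas, whereas you carry $B=\pm\l+\mu/n$ directly; the two bookkeepings are equivalent.
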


\begin{proof}
From (\ref{H-y-pm}) we have
\begin{eqnarray*}
(n\pm1)\vp(y_\pm)&\sim& n(1\pm{1\over n})\vp(y\mp{y\over2n}+{3y\over8n^2})
\\&\sim& n(1\pm{1\over n})(\vp\mp{y\vp'\over2n}+{3y\vp'\over8n^2}+{y^2\vp''\over8n^2})
\\&\sim& n(\vp\pm{\l\over n}+{\mu\over n^2}),
\end{eqnarray*}
where $\vp$ denotes $\vp(y)$, and $\l$ and $\mu$ are given in (\ref{H-l}) and (\ref{H-mu}).
It then follows that
\begin{eqnarray*}
\cos[(n\pm1)\vp(y_\pm)]&\sim&\cos(n\vp)\cos(\l\pm\mu/n)\mp\sin(n\vp)\sin(\l\pm\mu/n)
\nonumber\\&\sim&\cos(n\vp)\left(\cos\l\mp{\mu\over n}\sin\l\right)\mp\sin(n\vp)\left(\sin\l\pm{\mu\over n}\cos\l\right);
\\
\sin[(n\pm1)\vp(y_\pm)]&\sim&\sin(n\vp)\cos(\l\pm\mu/n)\pm\cos(n\vp)\sin(\l\pm\mu/n)
\nonumber\\&\sim&\sin(n\vp)\left(\cos\l\mp{\mu\over n}\sin\l\right)\pm\cos(n\vp)\left(\sin\l\pm{\mu\over n}\cos\l\right).
\end{eqnarray*}
This proves the lemma.
\end{proof}

\begin{proof}[Proof of Theorem \ref{thm-H-os}]
Define
\begin{eqnarray}\label{H-p}
p_n(x):=[\G(n/2+1/2)]^{-1}\pi_n(x).
\end{eqnarray}
We make a change of variable $x=x_n:=\sqrt{2n}y$.
It is easily seen from (\ref{H-rr}) and (\ref{H-p}) that
\begin{eqnarray}\label{H-p-rr}
{\G(n/2+1/2)\sqrt{2n}\over\G(n/2+1)}\cdot yp_n(\sqrt{2n}y)=p_{n+1}(\sqrt{2n}y)
+p_{n-1}(\sqrt{2n}y).
\end{eqnarray}
As in (\ref{L-p-asymp1}), we now assume
\begin{eqnarray}\label{H-Q}
p_n(\sqrt{2n}y)\sim n^\a [r(y)]^n\{f(y)\cos [n\vp(y)]+g(y)\sin [n\vp(y)]\}
\end{eqnarray}
as $n\to\infty$.
First, we shall determine the constant $\a$ and the functions $r(y)$ and $\vp(y)$ in (\ref{H-Q}).
From (\ref{H-y-pm}) and (\ref{H-Q}), we have
\begin{eqnarray}\label{H-Q-pm}
p_{n\pm1}(\sqrt{2n}y)&=&p_{n\pm1}(\sqrt{2(n\pm1)}y_\pm)
\nonumber\\&\sim& (n\pm1)^\a [r(y_\pm)]^{n\pm1}\{f(y_\pm)\cos[(n\pm1)\vp(y_\pm)]+g(y_\pm)\sin[(n\pm1)\vp(y_\pm)]\}.
\end{eqnarray}
Moreover, it can be shown from (\ref{H-y-pm}) that
\begin{eqnarray}\label{H-r-asymp1}
[r(y_\pm)]^{n\pm1}\sim r^{n\pm1}e^{\mp{yr'/2r}},
\end{eqnarray}
where $r=r(y)$.
Applying (\ref{H-y-pm}), (\ref{H-cos-asymp}), (\ref{H-sin-asymp}) and (\ref{H-r-asymp1}) to (\ref{H-Q-pm}) yields
\begin{eqnarray}\label{H-Q-pm-asymp1}
p_{n\pm1}(\sqrt{2n}y)\sim n^\a r^{n\pm1}e^{\mp{yr'/2r}}
[(f\cos\l\pm g\sin\l)\cos(n\vp)+(g\cos\l\mp f\sin\l)\sin(n\vp)].
\end{eqnarray}
Here $f$ and $g$ stand for $f(y)$ and $g(y)$.
By Stirling's formula \cite[(6.1.37)]{AS70} we have
\begin{eqnarray}\label{H-n}
{\G(n/2+1/2)\sqrt{n/2}\over\G(n/2+1)}\sim1-{1\over4n}.
\end{eqnarray}
A combination of (\ref{H-p-rr}), (\ref{H-Q}), (\ref{H-Q-pm-asymp1}) and (\ref{H-n}) implies
\begin{eqnarray*}
2y[f\cos(n\vp)+g\sin(n\vp)]&\sim&
re^{-{yr'/2r}}[(f\cos\l+g\sin\l)\cos(n\vp)+(g\cos\l-f\sin\l)\sin(n\vp)]
\\&&+
r^{-1}e^{yr'/2r}[(f\cos\l-g\sin\l)\cos(n\vp)+(g\cos\l+f\sin\l)\sin(n\vp)].
\end{eqnarray*}
Comparing the coefficients of $\cos(n\vp)$ and $\sin(n\vp)$ on both sides of the last formula gives
\begin{eqnarray*}
2yf=re^{-{yr'/2r}}(f\cos\l+g\sin\l)+r^{-1}e^{yr'/2r}(f\cos\l-g\sin\l);\\
2yg=re^{-{yr'/2r}}(g\cos\l-f\sin\l)+r^{-1}e^{yr'/2r}(g\cos\l+f\sin\l).
\end{eqnarray*}
Solving the above two equations, we obtain
$$\cosh\left(\log r-{yr'\over2r}\right)\cos\l=y,\qquad
\sinh\left(\log r-{yr'\over2r}\right)\sin\l=0.$$
A solution is
\begin{eqnarray}\label{H-r-eq}
\log r-{yr'\over2r}=0,\qquad \cos\l=y.
\end{eqnarray}
The first equation in (\ref{H-r-eq}) implies
\begin{eqnarray}\label{H-r}
r=e^{cy^2}
\end{eqnarray}
for some constant $c\in\C$.
From (\ref{H-l}) and (\ref{H-r-eq}), we have
$$\vp=\pm(\arccos y-y\sqrt{1-y^2})+c'y^2+2k\pi$$
for some constant $c'\in\C$ and $k\in\N$.
Without loss of generality, we may take $c'=0$ and $k=0$.
Hence,
\begin{eqnarray}\label{H-vp}
\vp=\arccos y-y\sqrt{1-y^2}.
\end{eqnarray}
Next, we are going to determine the functions $f$ and $g$ in (\ref{H-Q}).
From (\ref{H-y-pm}) and (\ref{H-r}) we obtain
\begin{eqnarray}\label{H-r-asymp2}
[r(y_\pm)]^{n\pm1}=[r(y)]^n
\end{eqnarray}
and
\begin{eqnarray}\label{H-fg-asymp}
f(y_\pm)\sim f\mp{yf'\over2n},\qquad g(y_\pm)\sim g\mp{yg'\over2n},
\end{eqnarray}
where $f=f(y)$ and $g=g(y)$.
Applying (\ref{H-cos-asymp}), (\ref{H-sin-asymp}), (\ref{H-r-asymp2}) and (\ref{H-fg-asymp}) to (\ref{H-Q-pm}) yields
\begin{eqnarray}\label{H-Q-pm-asymp2}
{p_{n\pm1}(\sqrt{2n}y)\over n^\a r^n}&\sim&(f\cos\l\pm g\sin\l)\cos(n\vp)+(g\cos\l\mp f\sin\l)\sin(n\vp)
\nonumber\\&&+{\cos(n\vp)\over n}
\left(\pm\a f\cos\l\mp\mu f\sin\l\mp{yf'\over2}\cos\l+\a g\sin\l+\mu g\cos\l-{yg'\over2}\sin\l\right)
\nonumber\\&&+{\sin(n\vp)\over n}\left(\pm\a g\cos\l\mp\mu g\sin\l\mp{yg'\over2}\cos\l-\a f\sin\l-\mu f\cos\l+{yf'\over2}\sin\l\right).
\nonumber\\
\end{eqnarray}
A combination of (\ref{H-p-rr}), (\ref{H-Q}), (\ref{H-n}) and (\ref{H-Q-pm-asymp2}) gives
\begin{eqnarray*}
\left(1-{1\over4n}\right)[2yf\cos(n\vp)+2yg\sin(n\vp)]&\sim& 2f\cos\l\cos(n\vp)+2g\cos\l\sin(n\vp)
\\&&+{\cos(n\vp)\over n}(2\a g\sin\l+2\mu g\cos\l-yg'\sin\l)
\\&&
+{\sin(n\vp)\over n}(-2\a f\sin\l-2\mu f\cos\l+yf'\sin\l).
\end{eqnarray*}
In view of the second equation in (\ref{H-r-eq}), we obtain by matching the coefficients in the last formula
\begin{eqnarray}
{f\over2}\cos\l+2\a g\sin\l+2\mu g\cos\l-yg'\sin\l=0;
\label{H-eq1}\\
{g\over2}\cos\l-2\a f\sin\l-2\mu f\cos\l+yf'\sin\l=0.
\label{H-eq2}
\end{eqnarray}
Note from (\ref{H-l}), (\ref{H-mu}) and (\ref{H-vp}) that
$$\l=\arccos y,\qquad \mu={y\over4\sqrt{1-y^2}}.$$
Hence, equations (\ref{H-eq1}) and (\ref{H-eq2}) can be written as
\begin{eqnarray}
f+{4\a g\sqrt{1-y^2}\over y}+{yg\over\sqrt{1-y^2}}-2g'\sqrt{1-y^2}=0;
\label{H-fg-eq1}\\
g-{4\a f\sqrt{1-y^2}\over y}-{yf\over\sqrt{1-y^2}}+2f'\sqrt{1-y^2}=0.
\label{H-fg-eq2}
\end{eqnarray}
Set
\begin{eqnarray}\label{H-fg}
u:=y^{-2\a}(1-y^2)^{1/4}f;\qquad v:=y^{-2\a}(1-y^2)^{1/4}g.
\end{eqnarray}
We then have from (\ref{H-fg-eq1}), (\ref{H-fg-eq2}) and (\ref{H-fg})
\begin{eqnarray}\label{H-uv-eq}
u'=-{v\over2\sqrt{1-y^2}};\qquad v'={u\over2\sqrt{1-y^2}}.
\end{eqnarray}
Define
\begin{eqnarray}\label{H-th}
\th=\th(y):=\arccos y.
\end{eqnarray}
The solution of the system (\ref{H-uv-eq}) is given by
\begin{eqnarray}
u=C_1\cos{\th\over2}+C_2\sin{\th\over2};\qquad
v=-C_1\sin{\th\over2}+C_2\cos{\th\over2},
\end{eqnarray}
where $C_1\in\C$ and $C_2\in\C$ are two arbitrary constants.
Consequently, we obtain from (\ref{H-fg}) that
\begin{eqnarray}
f&=&{y^{2\a}\over(1-y^2)^{1/4}}\left(C_1\cos{\th\over2}+C_2\sin{\th\over2}\right);
\label{H-f}\\
g&=&{y^{2\a}\over(1-y^2)^{1/4}}\left(-C_1\sin{\th\over2}+C_2\cos{\th\over2}\right).
\label{H-g}
\end{eqnarray}
Applying (\ref{H-r}), (\ref{H-f}) and (\ref{H-g}) to (\ref{H-Q}) yields
\begin{eqnarray}\label{H-p-os}
p_n(\sqrt{2n}y)\sim n^\a e^{ncy^2}y^{2\a}(1-y^2)^{-1/4}[C_1\cos(n\vp+\th/2)+C_2\sin(n\vp+\th/2)].
\end{eqnarray}
This formula holds uniformly for $y$ in a small complex neighborhood of $[-1+\d,1-\d]$.
Moreover, it follows from (\ref{H-out}) and (\ref{H-p}) that
\begin{eqnarray}\label{H-p-out}
p_n(\sqrt{2n}y)\sim{1\over\sqrt{2\pi}}\exp\left\{n[y^2-y\sqrt{y^2-1}+\log(y+\sqrt{y^2-1})]\right\}
\left({y+\sqrt{y^2-1}\over2\sqrt{y^2-1}}\right)^{1/2}
\end{eqnarray}
for complex $y$ bounded away from $[-1,1]$.
Finally, we match the above two formulas in an overlapping region to determine the constants $\a$, $c$, $C_1$ and $C_2$ in (\ref{H-p-os}).
For $\im y>0$, it follows from (\ref{H-th}) that $\im\th<0$;
see a similar statement following (\ref{L-p-out}).
Furthermore, it can be shown from (\ref{H-vp}) that if $\im y>0$,
then we also have $\im\vp<0$.
[To do this, one first notes that $\vp'(y)$ is negative for $y\in[-1+\d,1-\d]$.
Then, by the continuity of $\vp'$,
one concludes that $\re\vp'(y)<0$ for $y$ in a neighborhood of $[-1+\d,1-\d]$ in the complex plane.
Finally, the mean value theorem ensures that there exists a real number $\xi\in(0,\im y)$ such that $\vp(y)=\vp(\re y)+i(\im y)\vp'(\re y+i\xi)$,
from which one obtains $\im\vp(y)<0$.]
Thus, (\ref{H-p-os}) implies
$$p_n(\sqrt{2n}y)\sim n^\a e^{ncy^2}y^{2\a}(1-y^2)^{-1/4}\left({C_1\over2}+{C_2\over2i}\right)e^{in\vp+i\th/2}.$$
Meanwhile, we have from (\ref{H-th}) and (\ref{H-p-out})
$$p_n(\sqrt{2n}y)\sim{1\over\sqrt{2\pi}}\exp\left\{n[y^2-iy\sqrt{1-y^2}+i\arccos y]\right\}
\left[{e^{i(\th-\pi/2)}\over2\sqrt{1-y^2}}\right]^{1/2}.$$
Thus, we obtain from (\ref{H-vp}) and the above two formulas that $\a=0$, $c=1$ and
$${C_1\over2}+{C_2\over2i}={e^{-i\pi/4}\over2\sqrt{\pi}}.$$
Similarly, matching (\ref{H-p-os}) with (\ref{H-p-out}) in the region $\im y<0$ yields again $\a=0$, $c=1$ and the equation
$${C_1\over2}-{C_2\over2i}={e^{i\pi/4}\over2\sqrt{\pi}}.$$
Coupling the last two equations gives
$$C_1=C_2={1\over\sqrt{2\pi}}.$$
Therefore, we conclude that
$$\a=0,\qquad c=1,\qquad C_1=C_2={1\over\sqrt{2\pi}}.$$
This together with (\ref{H-p}), (\ref{H-vp}) and (\ref{H-p-os}) yields (\ref{H-os}).
\end{proof}

\section{Case 3: an open problem}
Recently, M. E. H. Ismail proposed the problem of finding asymptotic formulas for the orthogonal polynomials determined by
\begin{eqnarray}
&&\pi_{n+1}(x)=(x-n^2)\pi_n(x)-\frac 14\pi_{n-1}(x),\qquad n\ge1,\label{I-rr}\\
&&\pi_0(x)=1,\qquad \pi_1(x)=x;\label{I-ic}
\end{eqnarray}
see \cite[\S 6]{IK} and \cite[p. 370]{GM02}.
We first present a result for $x$ not in the interval of oscillation.
\begin{theorem}
As $n\to\infty$, we have
\begin{eqnarray}\label{I-out}
\pi_n(n^2y)\sim\left({n\over e}\right)^{2n}
\exp\left\{n[(\sqrt y+1)\log(\sqrt y+1)-(\sqrt y-1)\log(\sqrt y-1)]\right\}
\left({y\over y-1}\right)^{1/2}
\end{eqnarray}
for complex $y$ bounded away from $[0,1]$.
\end{theorem}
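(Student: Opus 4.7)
My plan is to mirror the method used in Cases 1 and 2. Set $\pi_n(x)=\prod_{k=1}^n w_k(x)$; combined with (\ref{I-rr}) and (\ref{I-ic}), this converts the three-term recurrence into the scalar iteration
\[
w_{k+1}(x)=(x-k^2)-\frac{1}{4w_k(x)},\qquad w_1(x)=x.
\]
Substitute $x=n^2y$ with $y$ bounded away from $[0,1]$. Then $x-k^2=n^2(y-(k/n)^2)$ has magnitude of order $n^2$ uniformly in $k=1,\ldots,n$, so the perturbation $1/(4w_k)$ is of order $n^{-2}$. A straightforward induction on $k$ (simpler than the one behind (\ref{H-wk-asymp}), since here $b_k=1/4$ is constant rather than growing with $k$) then gives
\[
w_k(n^2y)=\bigl[n^2y-(k-1)^2\bigr]\bigl[1+O(n^{-4})\bigr]
\]
uniformly for $k=1,\ldots,n$. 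The appearance of $(k-1)^2$ rather than $k^2$ is dictated by $w_1(x)=x=n^2y-0^2$: each step of the recurrence shifts the apparent index down by one.

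Taking logarithms and summing, I obtain
\[
\log\pi_n(n^2y)=\sum_{j=0}^{n-1}\log(n^2y-j^2)+O(n^{-3})=2n\log n+\sum_{j=0}^{n-1}\log\bigl(y-(j/n)^2\bigr)+O(n^{-3}).
\]
Because the index runs from $0$ to $n-1$ rather than $1$ to $n$, I use the left-endpoint form of the trapezoidal rule
\[
\sum_{j=0}^{n-1}f(j/n)=n\int_0^1 f(t)\,dt+\frac{f(0)-f(1)}{2}+O(n^{-1}),
\]
which differs in the sign of the boundary term from the right-endpoint version used elsewhere in the paper. With $f(t)=\log(y-t^2)$, the boundary correction contributes $\tfrac{1}{2}\log\bigl(y/(y-1)\bigr)$, which is exactly what produces the prefactor $\bigl(y/(y-1)\bigr)^{1/2}$ in (\ref{I-out}); the opposite sign convention would give the wrong sign on this square root.

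The last step is to evaluate the integral explicitly. Factoring $y-t^2=(\sqrt{y}-t)(\sqrt{y}+t)$ and integrating each logarithm by parts yields
\[
\int_0^1\log(y-t^2)\,dt=(\sqrt{y}+1)\log(\sqrt{y}+1)-(\sqrt{y}-1)\log(\sqrt{y}-1)-2.
\]
Combining the three pieces, the term $2n\log n-2n$ gives the factor $(n/e)^{2n}$, the integral gives the exponential factor, and the boundary correction gives the square-root prefactor, matching the right-hand side of (\ref{I-out}). The one real technical point is the uniform induction for $w_k(n^2y)$ at complex $y$: one must verify that $|n^2y-j^2|\ge cn^2$ uniformly for $0\le j\le n-1$ and some $c>0$, and this is exactly what the hypothesis that $y$ is bounded away from the whole interval $[0,1]$ guarantees.
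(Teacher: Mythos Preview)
Your proof is correct and follows the same overall strategy as the paper: product decomposition $\pi_n=\prod_k w_k$, a uniform asymptotic for $w_k(n^2y)$, and Euler--Maclaurin summation of $\log w_k$. The one substantive difference is the choice of reference point for $w_k$. The paper centers $w_k$ at $x-k^2$, writing
\[
w_k(n^2y)=n^2\Bigl(y-\tfrac{k^2}{n^2}\Bigr)\Bigl[1+\tfrac{2k}{n^2y-k^2}+O(n^{-2})\Bigr],
\]
which leaves a first-order correction $2k/(n^2y-k^2)$ that must be summed separately; its contribution $\log\bigl(y/(y-1)\bigr)$ then combines with the right-endpoint trapezoidal boundary term $\tfrac12\log\bigl((y-1)/y\bigr)$ to produce the net $\tfrac12\log\bigl(y/(y-1)\bigr)$. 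By centering instead at $x-(k-1)^2$, as dictated by the initial condition $w_1=x$, you drive the relative error in $w_k$ down to $O(n^{-4})$, so no secondary sum is needed and the correct prefactor emerges directly from a single application of the left-endpoint trapezoidal rule. This is a genuine streamlining of the bookkeeping, though the underlying method is the same.
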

\begin{proof}
Set
\begin{eqnarray}\label{I-wk}
\pi_n(x)=\prod_{k=1}^nw_k(x).
\end{eqnarray}
It follows from (\ref{I-rr}) and (\ref{I-ic}) that $w_1(x)=x$ and
$$w_{k+1}(x)=x-k^2-\frac{1}{4w_k(x)}.$$
Let $x=x_n:=n^2y$ with $y\in\C\cut[0,1]$.
As with the case of Hermite polynomials,
it can be shown that for real $x$ and $x\notin[0,n^2]$, we have
$$x-(k-1)^2-1<w_k(x)<x-(k-1)^2+1$$
for all $k=1,\cdots,n$. Thus,
$$1+{2k\over x-k^2}-{2\over x-k^2}<{w_k(x)\over x-k^2}<1+{2k\over x-k^2}.$$
Consequently,
\begin{eqnarray}\label{I-wk-asymp}
w_k(n^2y)=n^2\left(y-{k^2\over n^2}\right)\left[1+{2k\over n^2y-k^2}+O(n^{-2})\right]
\end{eqnarray}
as $n\to\infty$, uniformly in $k=1,\cdots,n$.
By using a continuity argument, it can be shown that the validity of this asymptotic formula can be extended to complex $y\in\C\cut[0,1]$.
In view of the trapezoidal rule
$$\frac 1n\sum_{k=1}^n f(k/n)\sim \int_0^1f(t)dt+\frac{f(1)-f(0)}{2n},$$
we have
\begin{eqnarray*}
\sum_{k=1}^n\log\left(y-{k^2\over n^2}\right)&\sim& n\int_0^1\log(y-t^2)dt+{1\over2}\log{y-1\over y}
\\&=&n[(\sqrt y+1)\log(\sqrt y+1)-(\sqrt y-1)\log(\sqrt y-1)-2]+{1\over2}\log{y-1\over y}
\end{eqnarray*}
and
\begin{eqnarray*}
\sum_{k=1}^n\log\left(1+{2k\over n^2y-k^2}\right)
\sim\sum_{k=1}^n{2k\over n^2y-k^2}
\sim\int_0^1{2t\over y-t^2}dt=\log{y\over y-1}
\end{eqnarray*}
as $n\to\infty$.
Applying the last two formulas and (\ref{I-wk-asymp}) to (\ref{I-wk}) gives (\ref{I-out}).
\end{proof}
Next we give a result for $x$ inside the interval of oscillation.
\begin{theorem}\label{thm-I-os}
Let $\d>0$ be any fixed small number.
For $y$ in a small neighborhood of $[\d,1-\d]$ in the complex plane, we have
\begin{eqnarray}\label{I-os}
\pi_n(n^2y)\sim(-1)^{n-1}2\sin(n\pi\sqrt y)\left({n\over e}\right)^{2n}
\left({1+\sqrt y\over1-\sqrt y}\right)^{n\sqrt y}y^{1/2}(1-y)^{n-1/2}
\end{eqnarray}
as $n\to\infty$.
\end{theorem}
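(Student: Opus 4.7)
The plan is to mirror the proof of Theorem~\ref{thm-H-os}, adapted to the scaling $x=n^2y$. First, I set $p_n(x):=2^n\pi_n(x)$, so that (\ref{I-rr}) becomes the symmetric three-term relation $p_{n+1}(x)+p_{n-1}(x)=2(x-n^2)p_n(x)$. Writing $y_\pm:=[n/(n\pm1)]^2y$, so that $n^2y=(n\pm1)^2y_\pm$ with $y_\pm\sim y\mp2y/n+3y/n^2$, I propose the ansatz
\begin{eqnarray*}
p_n(n^2y)\sim n^{2n+\a}e^{-2n}[r(y)]^n\{f(y)\cos[n\vp(y)]+g(y)\sin[n\vp(y)]\},
\end{eqnarray*}
in which the factor $n^{2n}e^{-2n}=(n/e)^{2n}$ accommodates the super-exponential growth visible in (\ref{I-out}). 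A calculation in the spirit of Lemma~3.4 then yields $(n\pm1)\vp(y_\pm)\sim n\vp\pm\l+\mu/n$, with $\l(y):=\vp-2y\vp'$ and $\mu(y):=y\vp'+2y^2\vp''$; the coefficients differ from the Hermite analogue because here the shift $y_\pm-y$ is of order $2y/n$ rather than $y/(2n)$.

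Substituting the ansatz into the recurrence and using the prefactor asymptotics $(n\pm1)^{2(n\pm1)+\a}e^{-2(n\pm1)}/[n^{2n+\a}e^{-2n}]\sim n^{\pm2}$, I find that the $p_{n+1}$ and $2n^2(y-1)p_n$ terms both contribute at order $n^2$, while the $p_{n-1}$ term enters only at order $n^{-2}$. Equating the coefficients of $\cos(n\vp)$ and $\sin(n\vp)$ at leading order gives a $2\times2$ homogeneous system for $(f,g)$ whose nontrivial solvability forces $\cos\l=-1$, hence $\l=\pi$, together with $re^{-2y(\log r)'}=2(1-y)$. The condition $\l=\pi$ reduces to the ODE $\vp-2y\vp'=\pi$, with general solution $\vp(y)=\pi+c\sqrt y$, while $L-2yL'=\log[2(1-y)]$ (with $L:=\log r$) admits the particular solution $L_p=\log 2+(1+\sqrt y)\log(1+\sqrt y)+(1-\sqrt y)\log(1-\sqrt y)$, corresponding to $r(y)=2(1-y)[(1+\sqrt y)/(1-\sqrt y)]^{\sqrt y}$. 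The constants $c$ and $\a$, the $\sqrt y$-homogeneous piece of $L$, and the amplitudes $f(y),g(y)$ will be pinned down by matching with (\ref{I-out}).

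For the matching, I analytically continue (\ref{I-out}) into $\im y>0$ via $\log(\sqrt y-1)=\log(1-\sqrt y)+i\pi$, which converts the exponential factor into $(-1)^nr(y)^ne^{-in\pi\sqrt y}/2^n$ and $(y/(y-1))^{1/2}$ into $-i\sqrt{y/(1-y)}$; continuation from $\im y<0$ yields the conjugate expression with $e^{+in\pi\sqrt y}$ and $+i\sqrt{y/(1-y)}$. In the ansatz, the dominating exponentials on the two sides are $e^{\mp in\pi\sqrt y}$, and equating them with the continued outer formula forces $\a=0$, $c=\pi$ (so that $\vp(y)=\pi(1+\sqrt y)$), $f(y)=0$, and $g(y)=-2\sqrt{y/(1-y)}$. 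The identity $(-1)^n\sin(n\pi\sqrt y)=\sin(n\pi(1+\sqrt y))=\sin(n\vp)$ then rearranges the ansatz into the stated formula (\ref{I-os}). The main obstacle is the highly asymmetric balance in the recurrence, where $p_{n+1}$ dominates $p_{n-1}$ by a factor of $n^4$: unlike in the Hermite case, the oscillatory structure cannot be extracted from a direct three-term leading-order balance but emerges only through the algebraic condition $\l=\pi$, with $\vp(y)$ and the amplitude coefficients then pinned down via the analytic continuation of the outer asymptotic.
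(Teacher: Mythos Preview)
Your argument is essentially correct and reaches the right formula, but it follows a different route from the paper's in two notable respects.

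First, the normalization. The paper sets $p_n(x):=(-1)^n\pi_n(x)/\Gamma(n)^2$, which reduces the recurrence to $(1-y)p_n=p_{n+1}+[4n^2(n-1)^2]^{-1}p_{n-1}$ and permits the clean ansatz $p_n(n^2y)\sim n^\a r(y)^n\{f\cos(n\vp)+g\sin(n\vp)\}$. The leading-order balance then gives $\l=0$ and $re^{-2yr'/r}=1-y$. Your choice $p_n=2^n\pi_n$ keeps the recurrence symmetric but forces you to carry the super-exponential factor $(n/e)^{2n}$ explicitly in the ansatz; the resulting sign discrepancy (you get $\cos\l=-1$, i.e.\ $\l=\pi$, and $r$ picks up an extra factor of~$2$) is exactly what the paper's $(-1)^n/\Gamma(n)^2$ absorbs. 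The two formulations are equivalent, but the paper's normalization makes the next-order computation tidier because the prefactor ratio is exactly~$1$ rather than $n^{\pm2}(1+O(1/n))$.

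Second, and more substantively, the paper does carry out the next-order balance: equating $O(1/n)$ terms yields the amplitude ODEs $yf/(1-y)+\a f-2yf'=0$ and likewise for $g$, whose solutions $f,g\propto y^{\a/2}(1-y)^{-1/2}$ are then matched with the outer asymptotic to fix $\a=1$, $C_1=0$, $C_2=-1/\pi$. You skip this step entirely, determining $f,g,\a$ purely by matching. That shortcut happens to give the right answer here, and your matching is done correctly; but it omits the consistency check that the chosen amplitudes actually satisfy the recurrence at the next order. In the paper's logic, the ODE step is what justifies extending the form of $f,g$ from the overlap region to all of $[\d,1-\d]$, rather than simply reading it off the analytic continuation of the outer formula. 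Your approach is therefore slightly more heuristic, though in the same spirit.
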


To prove the above theorem, we will need a lemma analogous to Lemma 1 in \cite{WW02}.
As in (\ref{H-y-pm}), for convenience we set
\begin{eqnarray}\label{I-y-pm}
y_\pm:=\left({n\over n\pm1}\right)^2y\sim y\mp{2y\over n}+{3y\over n^2}.
\end{eqnarray}
\begin{lemma}\label{I-lem}
Let $\vp(y)$ be any analytic function in a small neighborhood of $[\d,1-\d]$ in the complex plane,
we have
\begin{eqnarray}\label{I-cos-asymp}
\cos[(n\pm1)\vp(y_\pm)]&\sim&\cos(n\vp)\left(\cos\l\mp{\mu\over n}\sin\l\right)\mp\sin(n\vp)\left(\sin\l\pm{\mu\over n}\cos\l\right)
\end{eqnarray}
and
\begin{eqnarray}\label{I-sin-asymp}
\sin[(n\pm1)\vp(y_\pm)]&\sim&\sin(n\vp)\left(\cos\l\mp{\mu\over n}\sin\l\right)\pm\cos(n\vp)\left(\sin\l\pm{\mu\over n}\cos\l\right)
\end{eqnarray}
as $n\to\infty$,
where
\begin{eqnarray}\label{I-l}
\l=\l(y):=\vp(y)-2y\vp'(y),
\end{eqnarray}
and
\begin{eqnarray}\label{I-mu}
\mu=\mu(y):=y\vp'(y)+2y^2\vp''(y).
\end{eqnarray}
\end{lemma}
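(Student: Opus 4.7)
The proof plan mirrors the argument used for the analogous lemma in the Hermite case (for the Hermite case $y_\pm\sim y\mp y/(2n)+3y/(8n^2)$, whereas here $y_\pm\sim y\mp 2y/n+3y/n^2$; only the numerical coefficients differ). The whole lemma is a careful two-term Taylor expansion followed by application of the sum formulas for $\cos$ and $\sin$.

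First I would expand $\vp(y_\pm)$ about $y$ up to order $1/n^2$. Using (\ref{I-y-pm}) with $y_\pm-y\sim \mp 2y/n+3y/n^2$ and $(y_\pm-y)^2\sim 4y^2/n^2$, Taylor's theorem gives
\begin{eqnarray*}
\vp(y_\pm)\sim \vp\mp\frac{2y\vp'}{n}+\frac{3y\vp'+2y^2\vp''}{n^2},
\end{eqnarray*}
where $\vp,\vp',\vp''$ are evaluated at $y$. Multiplying by $n\pm1$ and collecting the $O(1)$ and $O(1/n)$ terms, one finds
\begin{eqnarray*}
(n\pm1)\vp(y_\pm)\sim n\vp\pm(\vp-2y\vp')+\frac{y\vp'+2y^2\vp''}{n}=n\vp\pm\l+\frac{\mu}{n},
\end{eqnarray*}
where $\l$ and $\mu$ are defined exactly as in (\ref{I-l}) and (\ref{I-mu}). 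This is the one computation where I expect to have to watch signs carefully, since both the expansion of $y_\pm$ and the factor $(n\pm1)$ contribute $\pm$-dependent pieces at order $1/n$.

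Next I would apply the angle addition formulas. Writing the argument as $n\vp+(\pm\l+\mu/n)$ and using $\cos(\pm\l)=\cos\l$, $\sin(\pm\l)=\pm\sin\l$, together with the first-order expansions $\cos(\mu/n)\sim 1$ and $\sin(\mu/n)\sim\mu/n$, one obtains
\begin{eqnarray*}
\cos(\pm\l+\mu/n)\sim\cos\l\mp\frac{\mu}{n}\sin\l,\qquad
\sin(\pm\l+\mu/n)\sim\pm\sin\l+\frac{\mu}{n}\cos\l=\pm\Bigl(\sin\l\pm\frac{\mu}{n}\cos\l\Bigr).
\end{eqnarray*}
Substituting these into $\cos[n\vp+(\pm\l+\mu/n)]=\cos(n\vp)\cos(\pm\l+\mu/n)-\sin(n\vp)\sin(\pm\l+\mu/n)$, and similarly for sine, yields (\ref{I-cos-asymp}) and (\ref{I-sin-asymp}) directly.

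There is no genuine obstacle here; the only place care is required is in bookkeeping the two independent uses of the $\pm$ sign (the one in $y_\pm$ and the one in $n\pm1$) so that the coefficients of $\l$ and $\mu/n$ come out with the signs recorded in the statement. Analyticity of $\vp$ in a complex neighborhood of $[\d,1-\d]$ is used only to justify the Taylor expansion to the required order uniformly on compact subsets.
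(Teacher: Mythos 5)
Your proposal is correct and follows essentially the same route as the paper: Taylor-expand $\vp(y_\pm)$ to order $n^{-2}$, multiply by $n\pm1$ to get $(n\pm1)\vp(y_\pm)\sim n\vp\pm\l+\mu/n$ with $\l,\mu$ as in (\ref{I-l})--(\ref{I-mu}), and then apply the angle-addition formulas (the paper groups the increment as $\pm(\l\pm\mu/n)$ rather than $\pm\l+\mu/n$, which is the same thing). All signs check out.
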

\begin{proof}
From (\ref{I-y-pm}) we have
\begin{eqnarray*}
(n\pm1)\vp(y_\pm)&\sim& n(1\pm{1\over n})\vp(y\mp{2y\over n}+{3y\over n^2})
\\&\sim& n(1\pm{1\over n})(\vp\mp{2y\vp'\over n}+{3y\vp'\over n^2}+{2y^2\vp''\over n^2})
\\&\sim& n(\vp\pm{\l\over n}+{\mu\over n^2}),
\end{eqnarray*}
where $\l$ and $\mu$ are given in (\ref{I-l}) and (\ref{I-mu}).
It then follows that
\begin{eqnarray*}
\cos[(n\pm1)\vp(y_\pm)]&\sim&\cos(n\vp)\cos(\l\pm\mu/n)\mp\sin(n\vp)\sin(\l\pm\mu/n)
\nonumber\\&\sim&\cos(n\vp)\left(\cos\l\mp{\mu\over n}\sin\l\right)\mp\sin(n\vp)\left(\sin\l\pm{\mu\over n}\cos\l\right);
\\
\sin[(n\pm1)\vp(y_\pm)]&\sim&\sin(n\vp)\cos(\l\pm\mu/n)\pm\cos(n\vp)\sin(\l\pm\mu/n)
\nonumber\\&\sim&\sin(n\vp)\left(\cos\l\mp{\mu\over n}\sin\l\right)\pm\cos(n\vp)\left(\sin\l\pm{\mu\over n}\cos\l\right).
\end{eqnarray*}
This proves the lemma.
\end{proof}

\begin{proof}[Proof of Theorem \ref{thm-I-os}]
Define
\begin{eqnarray}\label{I-p}
p_n(x):={(-1)^n\over\G(n)^2}\pi_n(x).
\end{eqnarray}
We make a change of variable $x=x_n:=n^2y$.
It is readily seen from (\ref{I-rr}) and (\ref{I-p}) that
\begin{eqnarray}\label{I-p-rr}
(1-y)p_n(n^2y)=p_{n+1}(n^2y)
+{1\over4n^2(n-1)^2}p_{n-1}(n^2y).
\end{eqnarray}
As in (\ref{H-Q}), we first assume
\begin{eqnarray}\label{I-Q}
p_n(n^2y)\sim n^\a[r(y)]^n\{f(y)\cos[n\vp(y)]+g(y)\sin[n\vp(y)]\}
\end{eqnarray}
as $n\to\infty$, and then determine the constant $\a$ and the functions $r(y)$, $f(y)$, $g(y)$ and $\vp(y)$ in the formula.
From (\ref{I-y-pm}) and (\ref{I-Q}) we have
\begin{eqnarray}\label{I-Q-pm}
p_{n\pm1}(n^2y)&=&p_{n\pm1}((n\pm1)^2y_\pm)
\nonumber\\&\sim& (n\pm1)^\a [r(y_\pm)]^{n\pm1}\{f(y_\pm)\cos[(n\pm1)\vp(y_\pm)]+g(y_\pm)\sin[(n\pm1)\vp(y_\pm)]\}.
\end{eqnarray}
Moreover, it can be shown from (\ref{I-y-pm}) that as $n\to\infty$, we also have
\begin{eqnarray}\label{I-r-asymp}
[r(y_\pm)]^{n\pm1}\sim r^{n\pm1}e^{\mp2y{r'/ r}},
\end{eqnarray}
where $r$ stands for $r(y)$.
Applying (\ref{I-y-pm}), (\ref{I-cos-asymp}), (\ref{I-sin-asymp}) and (\ref{I-r-asymp}) to (\ref{I-Q-pm}) yields
\begin{eqnarray}\label{I-Q-pm-asymp}
p_{n\pm1}(n^2y)\sim n^\a r^{n\pm1}e^{\mp{2yr'/ r}}
[(f\cos\l\pm g\sin\l)\cos(n\vp)+(g\cos\l\mp f\sin\l)\sin(n\vp)].
\end{eqnarray}
A combination of (\ref{I-p-rr}), (\ref{I-Q}) and (\ref{I-Q-pm-asymp}) gives
\begin{eqnarray*}
(1-y)[f\cos(n\vp)+g\sin(n\vp)]\sim
re^{-{2yr'/ r}}[(f\cos\l+g\sin\l)\cos(n\vp)+(g\cos\l-f\sin\l)\sin(n\vp)].
\end{eqnarray*}
By comparing the coefficients of $\cos(n\vp)$ and $\sin(n\vp)$ on both sides of the last formula, we obtain
\begin{eqnarray*}
(1-y)f=re^{-{2yr'/ r}}(f\cos\l+g\sin\l);\\
(1-y)g=re^{-{2yr'/ r}}(g\cos\l-f\sin\l).
\end{eqnarray*}
Thus, we have from the above equations
$$(1-y)=re^{-{2yr'/ r}}\cos\l,\qquad 0=re^{-{2yr'/ r}}\sin\l.$$
The only solution is $\l=0$, and
\begin{eqnarray}\label{I-r-eq}
re^{-{2yr'/ r}}=1-y.
\end{eqnarray}
With $\l=0$, we obtain from (\ref{I-l})
\begin{eqnarray}\label{I-vp}
\vp=c\sqrt y
\end{eqnarray}
for some constant $c\in\C$.
Let $R(y):=\log r(y)$. From (\ref{I-r-eq}),
it is easily seen that $R(y)$ satisfies a first-order linear inhomogeneous equation,
whose solution is given by
$$R(y)=-{1\over2}y^{1/2}\left[\int^y s^{-3/2}\log(1-s)ds\right].$$
Upon integration by parts, followed by a change of variable $u=s^{1/2}$,
one obtains
$$R(y)=\log(1-y)+2y^{1/2}\arctanh\sqrt y+c'\sqrt y$$
for some constant $c'\in\C$.
Taking exponential on both sides gives
\begin{eqnarray*}
r(y)=(1-y)\left({1+\sqrt y\over1-\sqrt y}\right)^{\sqrt y}e^{c'\sqrt y}.
\end{eqnarray*}
Without loss of generality, we may assume $c'=0$. Hence,
\begin{eqnarray}\label{I-r}
r(y)=(1-y)\left({1+\sqrt y\over1-\sqrt y}\right)^{\sqrt y}.
\end{eqnarray}
Next we determine the functions $f$ and $g$ in (\ref{I-Q}).
From (\ref{I-y-pm}), (\ref{I-r-eq}) and (\ref{I-r}) we have
\begin{eqnarray}
[r(y_\pm)]^{n\pm1}\sim(1-y)^{\pm1}[r(y)]^n\left[1+{y\over n(1-y)}\right].
\end{eqnarray}
Furthermore, it is easily seen from (\ref{I-y-pm}) and (\ref{I-vp}) that
$(n\pm1)\vp(y_\pm)\sim n\vp(y)$
and
$$f(y_\pm)\sim f(y)\mp{2yf'(y)\over n},\qquad g(y_\pm)\sim g(y)\mp{2yg'(y)\over n}.$$
Applying the above formulas for functions $r$, $\vp$, $f$ and $g$ to (\ref{I-Q-pm}) yields
$$p_{n\pm1}(n^2y)\sim n^\a r^n(1-y)^{\pm1}\left(1\pm{\a\over n}\right)\left[1+{y\over n(1-y)}\right]\left[\left(f\mp{2yf'\over n}\right)\cos(n\vp)
+\left(g\mp{2yg'\over n}\right)\sin(n\vp)\right].$$
[One can also obtain this result from Lemma \ref{I-lem}, since $\l=\mu=0$ by (\ref{I-vp}).]
This together with (\ref{I-p-rr}) and (\ref{I-Q}) implies
\begin{eqnarray*}
f\cos(n\vp)+g\sin(n\vp)\sim\left[f+{1\over n}\left({yf\over1-y}+\a f-2yf'\right)\right]\cos(n\vp)
+\left[g+{1\over n}\left({yg\over1-y}+\a g-2yg'\right)\right]\sin(n\vp).
\end{eqnarray*}
Comparing the coefficients on both sides of the last formula gives
$${yf\over1-y}+\a f-2yf'=0,\qquad {yg\over1-y}+\a g-2yg'=0.$$
Hence,
\begin{eqnarray}\label{I-fg}
f=C_1y^{\a/2}(1-y)^{-1/2},\qquad g=C_2y^{\a/2}(1-y)^{-1/2},
\end{eqnarray}
where $C_1\in\C$ and $C_2\in\C$ are two arbitrary constants.
Applying (\ref{I-vp}), (\ref{I-r}) and (\ref{I-fg}) to (\ref{I-Q}) yields
\begin{eqnarray}\label{I-p-os}
p_n(n^2y)\sim n^\a y^{\a/2}(1-y)^{n-1/2}\left({1+\sqrt y\over1-\sqrt y}\right)^{n\sqrt y}[C_1\cos(nc\sqrt y)+C_2\sin(nc\sqrt y)].
\end{eqnarray}
This formula holds uniformly for $y$ in a small neighborhood of $[\d,1-\d]$ in the complex plane.
Moreover, it follows from (\ref{I-out}) and (\ref{I-p}) that
\begin{eqnarray}\label{I-p-out}
p_n(n^2y)\sim{(-1)^nn\over2\pi}
\exp\left\{n[(\sqrt y+1)\log(\sqrt y+1)-(\sqrt y-1)\log(\sqrt y-1)]\right\}
\left({y\over y-1}\right)^{1/2}
\end{eqnarray}
for complex $y$ bounded away from $[0,1]$.
At the final stage, we match the last two formulas in an overlapping region
to determine the constants $\a$, $c$, $C_1$ and $C_2$ in (\ref{I-p-os}).
In view of the equalities $\exp(\pm inc\sqrt y)=\cos(nc\sqrt y)\pm i\sin(nc\sqrt y)$
and
$$(1-y)^n\left({1+\sqrt y\over1-\sqrt y}\right)^{n\sqrt y}=
\exp\left\{n[(\sqrt y+1)\log(\sqrt y+1)-(\sqrt y-1)\log(1-\sqrt y)]\right\},$$
formula (\ref{I-p-os}) can be written as
\begin{eqnarray}\label{I-p-os2}
p_n(n^2y)&\sim& n^\a y^{\a/2}(1-y)^{-1/2}\exp\left\{n[(\sqrt y+1)\log(\sqrt y+1)-(\sqrt y-1)\log(1-\sqrt y)]\right\}
\nonumber\\&&\times\left[\left({C_1\over2}-{C_2\over2i}\right)e^{-inc\sqrt y}+
\left({C_1\over2}+{C_2\over2i}\right)e^{inc\sqrt y}\right].
\end{eqnarray}
Meanwhile, it follows from (\ref{I-p-out}) that for $\im y>0$, we have
$$p_n(n^2y)\sim{n\over2\pi}
\exp\left\{n[(\sqrt y+1)\log(\sqrt y+1)-(\sqrt y-1)\log(1-\sqrt y)]-in\pi\sqrt y-i\pi/2\right\}
\left({y\over 1-y}\right)^{1/2}.$$
A comparison of the above two asymptotic formulas shows that $\a=1$ and $c=\pi$ or $c=-\pi$.
Without loss of generality, we take $c=\pi$.
Note that the function $\exp(inc\sqrt y)=\exp(in\pi\sqrt y)$ is exponentially small, and hence negligible in the region $\im y>0$.
By matching the last two formulas one more time, and ignoring the exponentially small term, we have
$${C_1\over2}-{C_2\over2i}={e^{-i\pi/2}\over2\pi}.$$
With $\a=1$ and $c=\pi$, we match (\ref{I-p-out}) with (\ref{I-p-os2}) in the region $\im y<0$ to obtain the other equation
$${C_1\over2}+{C_2\over2i}={e^{i\pi/2}\over2\pi}.$$
Upon solving the last two equations, we obtain $C_1=0$ and $C_2=-1/\pi$.
Therefore, we conclude that
$$\a=1,\qquad c=\pi,\qquad C_1=0,\qquad C_2=-1/\pi.$$
Combining this with (\ref{I-p}) and (\ref{I-p-os}) gives (\ref{I-os}).
\end{proof}


\begin{thebibliography}{99}


\bibitem{AS70} M. Abramowitz and I. A. Stegun, \textsl{``Handbook of Mathematical Functions,
with Formulas, Graphs, and Mathematical Tables"}, Dover Publications, Inc., New York, 1970.

\bibitem{DKMVZ99-0} P. Deift, T. Kriecherbauer, K. T.-R. McLaughlin,
S. Venakides and X. Zhou, Uniform asymptotics for polynomials orthogonal with respect to varying exponential weights and applications to universality questions in random matrix theory, {\it Comm.
Pure Appl. Math.} {\bf 52} (1999), 1335-1425.

\bibitem{DKMVZ99} P. Deift, T. Kriecherbauer, K. T.-R. McLaughlin,
S. Venakides and X. Zhou, Strong asymptotics of
orthogonal polynomials with respect to exponential weights,
{\it Comm. Pure Appl. Math.} {\bf 52} (1999), 1491-1552.

\bibitem{GM02} S. S. Goh and C. A. Micchelli,
Uncertainty principles in Hilbert spaces,
{\it J. Fourier Anal. Appl.} {\bf 8} (2002), 335-373.

\bibitem{IK} M. E. H. Ismail and E. Koelink,
The J-Matrix method and eigenfunction expansions,
preprint (July, 2010).


\bibitem{KS98} R. Koekoek and R. F. Swarttouw, \textsl{``The Askey-scheme
of Hypergeometric Orthogonal Polynomials and its $q$-analogue"},
Report no. 98-17, TU-Delft, 1998.


\bibitem{Ol97} F. W. J. Olver, \textsl{``Asymptotics and Special
Functions"}, Academic Press, New York, 1974. Reprinted by A. K.
Peters, Wellesley, MA, 1997.


\bibitem{WW02} Z. Wang and R. Wong,
Uniform asymptotic expansion of $J_\nu(\nu a)$ via a difference equation,
{\it Numer. Math.} {\bf 91} (2002) 147-193.



\bibitem{Wo89} R. Wong, \textsl{``Asymptotic Approximations of
Integrals"}, Academic Press, Boston, 1989. (Reprinted by SIAM,
Philadelphia, PA, 2001.)


\bibitem{WL92} R. Wong and H. Li,
Asymptotic expansions for second-order linear difference equations,
{\it J. Comput. Appl. Math.} {\bf 41} (1992) 65-94.


\end{thebibliography}
\end{document}